\numberwithin{equation}{section}
\def\H{\mathcal H}
\def\R{\mathbb R}
\def\N{\mathbb N}
\def\Z{\mathbb Z}
\newcommand{\dist}{\mathop{\mathrm{dist}}}
\def\e{\varepsilon}
\def\s{\sigma}
\def\vphi{\varphi}
\def\Id{{\rm Id}}
\def\spt{{\rm spt}}
\def\pa{\partial}
\def\00{{\bf 0}}
\def\G{\mathbb{G}}
\def\P{\mathcal{P}}
\newcommand{\D}{\Delta}
\newcommand{\tr}{\mbox{tr }}
\newcommand{\cc}{\subset\subset}
\def\Lip{{\rm Lip}\,}
\newcommand\res{\mathop{\hbox{\vrule height 7pt width .3pt depth 0pt \vrule height .3pt width 5pt depth 0pt}}\nolimits}
\def\weak{\stackrel{*}{\rightharpoonup}}
\def\FF{\mathbf{F}}
\def\D{\mathsf{D}}
\def\Id{\mathrm{Id}}
\def\Ker{{\rm Ker}}
\theoremstyle{plain}
\newtheorem{theorem}{Theorem} [section]
\newtheorem{lemma}[theorem]{Lemma}
\theoremstyle{definition}
\newtheorem{definition}[theorem]{Definition}
\newtheorem{remark}[theorem]{Remark}
\newtheorem*{ack}{Acknowledgements}
\title{Existence results for minimizers of parametric elliptic functionals}
\author{Guido De Philippis}
\address{SISSA, Via Bonomea 265, 34136 Trieste, Italy}
\email{gdephili@sissa.it}
\author{Antonio De Rosa}
\address{Institut f\"ur Mathematik, Universit\"at Z\"urich, Winterthurerstrasse 190, CH-8057 Z\"urich, Switzerland}
\email{antonio.derosa@math.uzh.ch}
\author{Francesco Ghiraldin}
\address{Universit\"at Basel, Spiegelgasse 1, CH-4051 Basel,Switzerland. (Formerly Max Planck Institute, Leipzig (DE) )}
\email{francesco.ghiraldin@unibas.ch, Francesco.Ghiraldin@mis.mpg.de}
\begin{document}

\begin{abstract} 

We prove a compactness principle for the anisotropic formulation of the Plateau problem in any codimension, in the same spirit of the previous works of the authors \cite{DelGhiMag,DePDeRGhi,DeLDeRGhi16}. In particular, we perform a new strategy for the proof of the rectifiability of the minimal set, based on the new anisotropic counterpart of the Allard rectifiability theorem proved  in \cite{DePDeRGhi2}. As a consequence we provide a new proof of the Reifenberg existence theorem.


\end{abstract}

\subjclass[2010]{49Q20 (28A75 49Q15)}

\maketitle

\section{Introduction and Main Result}\label{intro}
This paper concludes a series of works by the authors on the Plateau problem: we here provide a general and flexible existence result for sets that 
minimize an anisotropic energy, which can be applied to several notions of boundary conditions.

In the spirit of the previous works \cite{DelGhiMag},\cite{DePDeRGhi} and \cite{DeLDeRGhi16},
we use the direct method of the calculus of variations to find a generalised minimizer  (namely a Radon  measure) via standard compactness arguments, and then we aim at proving that it is actually a fairly regular surface. 
To do this, we employed several techniques to first establish the rectifiability of the limit measure: in the case of the area integrand this property was initially deduced from a powerful
 result due to Preiss \cite{Preiss, DeLellisNOTES}, as well as, in codimension one, from the theory of sets of finite perimeter. 
 These two techniques are no longer available in the case of anisotropic problems in higher codimension (in particular due to the lack of a monotonicity formula for anisotropic problems). A new rectifiability criterion was found in \cite[Theorem 1.2]{DePDeRGhi2}, for varifolds having positive lower density  and a bounded anisotropic first variation, extending the celebrated result by Allard \cite{Allard}, see also \cite{DeRosa}. 

The proof of the existence theorem can be applied to the minimization of the energy in several classes of sets, 
corresponding to several notions of boundary conditions as the one used in \cite{HarrisonPugh15,HarrisonPugh16}. 
In particular we  discuss the existence theorem for minimizers with a homological notion of boundary, originally considered by Reifenberg in the isotropic case \cite{reifenberg1}, see Section~\ref{s:Reif}. Our techniques can as well be extended to prove existence for the anisotropic Plateau problem under co-homological boundary conditions, first considered in~\cite{HarrisonPugh16} where however  more general  assumptions on the integrand are allowed, see  Remark~\ref{rmk:onesto} and~\ref{rmk:cohom} below. 
Recently two related existence theorems have been proved in \cite{Fang2013} and in  \cite{FanKol17}, following the strategy of \cite{Almgren68, Almgren76}. 
In particular in \cite{Fang2013} an existence Theorem in the Reifenberg setting is extended to the case of \(\Z\) coefficients, see the discussion after Theorem \ref{thm:reif} below.

\bigskip
In order to precisely  state our main result, we  introduce some notations and definitions. 
We will always work in \(\R^n\) and \(1\le d\le n \) will 
always be an integer number. 
We recall that a set \(K\) is said to be \(d\)-rectifiable if it can be covered, 
up to an \(\H^d\) negligible set, by countably many \(C^1\) manifolds where \(\H^d\) is the 
\(d\)-dimensional Hausdorff measure.   We also denote by
$G=G(n,d)$ the Grassmannian of unoriented $d$-dimensional linear subspaces of $\R^{n}$ and, for every $U\subset \R^n$, we define $G(U):=U\times G$. Given a $d$-rectifiable set $K$, we denote by $T_K(x)$ the approximate tangent space of $K$ at $x$, 
which exists for $\H^d$-almost every point $x \in K$ \cite[Chapter 3]{SimonLN}. 
We also  let $\mathsf{Lip}(\R^n)$ be the space of Lipschitz maps in $\R^n$.

The anisotropic Lagrangians considered in the rest of the note will be $C^1$ maps 
$$
F: \R^{n}\times G\ni (x,T)\mapsto F(x,T) \in  \R^+,
$$
verifying the lower and upper bounds 
\begin{equation}\label{cost per area}
0 < \lambda \leq F(x,T) \leq \Lambda<\infty.
\end{equation}
Given a $d$-rectifiable set $K$ and an open subset $U\subset \R^n$, we define:
\begin{equation}\label{energia}
\FF(K,U) := \int_{K\cap U} F(x,T_K(x))\, d\H^d(x) \mbox{ \ \ and \ \ } \FF(K) := \FF(K,\R^{n}).
\end{equation}
It will be also useful to look at the ``frozen'' functional: for $y\in\R^{n}$, we let
\begin{equation*}
\FF^y(K,U) := \int_{K\cap U} F(y,T_K(x))\, d\H^d(x).
\end{equation*}
We note that given a  \(d\)-dimensional varifold  \(V\) (i.e., a positive Radon measure on the Grassmannian \(G(U)\)) we can define its anisotropic energy as 
\[
\FF(V,U) := \int F(x,T)\, dV(x,T);
\]
this definition is coherent with~\eqref{energia}, since to any  rectifiable set \(K\) we will naturally associate the varifold \(\H^d\res K\otimes\delta_{T_K(x)}\). In this setting we define the {\em anisotropic first variation} of a varifold \(V\) as the order one distribution whose action on \(g\in C_c^1(U,\R^n)\) is given by 
 \begin{equation*}\label{eq:firstvarintro}
 \begin{split}
 \delta_F V(g): =&\,\frac{d}{dt}\FF\big ( \varphi_t^{\#}V\big)\Big|_{t=0}
 \\
 =&\int_{\Omega\times G(n,d)}  \Big[\langle d_xF(x,T),g(x)\rangle+  B_F(x,T):Dg(x)  \Big] dV(x,T),
\end{split}
 \end{equation*}
 where  \(\varphi_t(x)=x+tg(x)\), \( \varphi_t ^{\#}V\) is the image varifold of \(V\) through \(\varphi_t\) see~\cite[Chapter 8]{SimonLN}, 
 \(B_F(x,T)\in \R^n\otimes\R^n\) is an explicitly computable \(n\times n\) matrix 
  and  \(\langle A,B\rangle :=\tr  A^* B\) for \(A,B\in \R^n\otimes \R^n\), see for instance~\cite{DePDeRGhi2} for  the relevant computations. A varifold \(V\) is said to be \(F\)-stationary in an open set  \(U\) if \(\delta_F V=0\) as a distribution in \(U\).

\medskip

Throughout all the paper, $H\subset \mathbb R^{n}$ will denote a closed subset of $\R^{n}$. 
Assume to have a class $\mathcal{P} (H)$ of relatively closed $d$-rectifiable subsets $K$ of $\mathbb R^{n}\setminus H$. One can then formulate the anisotropic Plateau problem by asking whether the infimum
\begin{equation}
  \label{plateau problem generale}
m_0 :=  \inf \big\{\FF(K) : K\in \mathcal{P}(H)\big \}
\end{equation}
is achieved by some set (which should be a suitable  limit of a minimizing sequence).
A second question is whether this set belongs to the chosen class $\P(H)$ and which additional regularity properties  it satisfies. We will say that a sequence $\{K_j\} \subset \mathcal{P} (H)$ is a {\em minimizing sequence} if   $\FF (K_j) \downarrow m_0$.

We next outline a set of flexible and rather weak requirements for $\mathcal{P}(H)$: the key property 
for $K'$ to be a competitor of $K$ is that $K'$ is close in energy to sets obtained from $K$ via deformation maps as in Definition \ref{d:deform}. 
This allows for a slightly  larger flexibility on the choice of the admissible sets, since a priori $K'$ might not belong to the competition class.

\begin{definition}[Lipschitz deformations]\label{d:deform}
Given a ball $B_{x,r}$, we let $\mathfrak D(x,r)$ be the set of functions $\vphi:\R^n \rightarrow \R^n$ 
such that $\varphi(z)=z$ in $\R^n\setminus B_{x,r}$ and which are smoothly isotopic 
to the identity inside $B_{x,r}$, namely those for which there exists an isotopy $\lambda \in C^\infty([0,1]\times \R^n;\R^n)$ such that 
$$\lambda(0,\cdot) = \Id, \quad \lambda(1,\cdot)=\vphi, \quad \lambda(t,y)=y \quad\forall\,(t,y)\in [0,1]\times (\R^n \setminus B_{x,r}) \quad \mbox{ and } $$
$$ \lambda(t,\cdot) \mbox{\ is a diffeomorphism of } \R^n \ \forall t \in [0,1].
$$
We finally set $\D(x,r):=\overline{\mathfrak D(x,r)}^{w^*-W^{1,\infty}}$, the sequential closure of $\mathfrak D(x,r)$ with respect to the uniform convergence with equibounded differentials.
\end{definition}
Observe that in the definition of $\D(x,r)$ it is equivalent to require any $C^k$ 
regularity on the isotopy $\lambda$, for $k\ge 1$, as $C^k$ isotopies  coinciding with the identity outside $B_{x,r}$ 
can be approximated in $C^k$ by smooth ones also supported in the same set.

\begin{definition}[Deformed competitors and good class]\label{def good class}
Let $H\subset \mathbb R^{n}$ be closed,  $K\subset \mathbb R^{n}\setminus H$ be a relatively closed countably $\H^d$-rectifiable set and  $B_{x,r}\subset \mathbb R^{n}\setminus H$.
A  {\em deformed competitor} for $K$ in $B_{x,r}$ is any set of the form
\begin{equation*}
 \varphi \left (  K \right ) \quad \mbox{ where } \quad \varphi \in \D(x,r).
\end{equation*}

Given a family $\mathcal{P} (H)$ of relatively closed \(d\)-rectifiable subsets $K\subset \mathbb R^{n}\setminus H$, we say that $\mathcal{P} (H)$ is a {\em good class} if for every $K\in\mathcal{P} (H)$, for every $x\in K$ and for a.e. $r\in (0, \dist (x, H))$
\begin{equation}
  \label{inf good class}
  \inf \big\{ \FF (J) : J\in \mathcal{P} (H)\,,J\setminus \overline{B_{x,r}} =K\setminus \overline
{B_{x,r}} \big\} \leq \FF (L)\,
\end{equation}
whenever $L$ is any deformed competitor for $K$ in $B_{x,r}$.
\end{definition}

We will assume the following ellipticity condition on the energy $\FF$, introduced in \cite{Almgren68}, which is a geometric version of quasiconvexity, cf. \cite{Morrey}:

\begin{definition}[Elliptic integrand, {\cite[1.2]{Almgren68}}]\label{d:Almgrenell}
The anisotropic Lagrangian $F$ is said to be \emph{elliptic} if there exists $\Gamma \geq 0$ such that, whenever $x \in \R^{n}$ and $D$ is a $d$-disk centered in $x$, with radius $r$ and contained in a $d$-plane $T$, then the inequality
\begin{equation}\label{eq:ellitico}
\FF^x(K,B_{x,r})- \FF^x(D,B_{x,r}) \geq \Gamma(\H^d(K \cap B_{x,r}) - \H^d(D))
\end{equation}
holds for every compact $d$-rectifiable set $K \subset \overline{B_{x,r}}$ such that $D \subset \pi_T(K)$, where $\pi_T:\R^n \to T$  is the orthogonal projection of $\R^n$ onto $T$.
\end{definition}
\begin{remark}\label{r:lagr}
Given a $d$-rectifiable set $K$ and a deformation $\varphi \in \D(x,r)$, using Property \eqref{cost per area}, we deduce the quasiminimality property
 \begin{equation}\label{quasiminimo}
 \FF(\varphi(K))\leq \Lambda \H^d(\varphi(K)) \leq \Lambda (\Lip(\varphi))^d \H^d(K) \leq \frac{\Lambda}{\lambda} (\Lip(\varphi))^d \FF(K).
 \end{equation}
\end{remark}

In \cite{DePDeRGhi2} the authors obtained an extension of Allard's rectifiability Theorem for  varifolds stationary with respect to anisotropic integrands. Let us first recall the following condition, introduced in \cite{DePDeRGhi2}:

\begin{definition}\label{atomica}
For a given integrand  $F\in C^1(\Omega\times G(n,d)) $, \(x\in \Omega\),  and a Borel  probability measure $\mu \in \mathcal P (G(n,d))$, let us define  
\begin{equation*}\label{eq:A}
A_x(\mu):=\int_{G(n,d)} B_F(x,T )d\mu(T)\in \R^n\otimes \R^n.
\end{equation*}
We say that \(F\) verifies the \emph{atomic condition} $(AC)$ at \(x\) if the following two conditions are satisfied:
\begin{itemize}
\item[(i)]  \(\dim\ker A_x(\mu)\le n-d\) for all \(\mu \in \mathcal P (G(n,d))\),
\item[(ii)]  if  \(\dim\ker A_x(\mu)= n-d\), then  \(\mu=\delta_{T_0}\) for some \(T_0\in G(n,d)\).
\end{itemize}
\end{definition}

The following rectifiability criterion is the main result in  \cite{DePDeRGhi2}:

\begin{theorem}[{\cite[Theorem 1.2]{DePDeRGhi2}}]\label{thm:mainCPAM}
 Let $F\in C^1(G(\R^n),\R^+) $ be a positive integrand satisfying the (AC) condition, and let us suppose that 
 $V$ is a $d$-dimensional varifold such that:
 \begin{itemize}
 \item $V$ has bounded anisotropic first variation: $\delta_F V$ is a Radon measure. 
 \item $V$ has a lower density bound: there exists $\theta_0>0$ such that
 \[
 \frac{\| V \|(B_{x,r})}{r^d}= \frac{ V (B_{x,r}\times G(n,d))}{r^d}\geq\theta_0\qquad\mbox{ for all }x\in K \mbox{ and }r<dist(x,H).
 \]
 \end{itemize}
 Then $V$ is $d$-rectifiable.
\end{theorem}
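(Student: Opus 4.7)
The plan is a blow-up argument adapted to the absence of a monotonicity formula. First, combining the upper bound $F\le\Lambda$ with the boundedness of $\delta_F V$ and a standard maximal-function argument, one obtains an upper density bound $\theta^{*d}(\|V\|,x)\le C$ for $\|V\|$-a.e.\ $x$; together with the hypothesis $\theta^{*d}(\|V\|,x)\ge\theta_0$, this yields $\|V\|\ll \H^d\res \spt\|V\|$. For $\|V\|$-a.e.\ $x_0$, the rescaled varifolds $V_{x_0,r_k}:=(\eta_{x_0,r_k})_\#V$ then enjoy both mass non-degeneracy and local mass bounds, so by compactness of Radon measures tangent varifolds $C$ exist as weak-$*$ limits along some sequence $r_k\downarrow 0$.

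Next I would verify that every tangent $C$ is stationary for the \emph{frozen} integrand $F^{x_0}(\cdot,T):=F(x_0,T)$. Indeed, the anisotropic first-variation expression rescales cleanly: the inhomogeneous term $\langle d_xF(x,T),g(x)\rangle$ acquires an extra factor of $r_k$ and vanishes in the limit, while the principal term $B_F(x,T):Dg(x)$ becomes $B_F(x_0,T):Dg$, so $\delta_{F^{x_0}}C=0$.

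The crucial step is to extract from $\delta_{F^{x_0}}C=0$ and (AC) that $C$ is flat. Disintegrating $C=\|C\|\otimes\nu_y$ over $G(n,d)$ and testing the stationarity identity against vector fields of the form $\phi\,e_i$ shows that the matrix-valued Radon measure $A_{x_0}(\nu_y)\,d\|C\|(y)$ is divergence-free on $\R^n$. A further blow-up at a generic $y\in\spt\|C\|$ produces a tangent measure to $\|C\|$ that is invariant under translation along some subspace, and the divergence-free constraint forces $A_{x_0}(\nu_y)$ to annihilate its orthogonal complement, giving $\dim\ker A_{x_0}(\nu_y)\ge n-d$. By (AC)(i) this must be an equality, whence (AC)(ii) yields $\nu_y=\delta_{T(y)}$ for $\|C\|$-a.e.\ $y$; rerunning the divergence-free argument with this information forces $T(y)$ to be locally constant. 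Consequently every tangent varifold at a typical $x_0$ has the form $\theta\,\H^d\res T_0\otimes\delta_{T_0}$, and a Preiss--Mattila-type criterion (unique flat tangent measures imply rectifiability) concludes the $d$-rectifiability of $V$. The chief obstacle is extracting the algebraic bound $\dim\ker A_{x_0}(\nu_y)\ge n-d$ from the distributional identity, which must be done purely at the Radon-measure level since $\nu_y$ is only a measurable disintegration; identifying (AC) as precisely the correct algebraic rigidity condition that allows this step to close is the heart of the argument.
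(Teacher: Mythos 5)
Your outline captures the correct high-level strategy, and it shares the main ingredients with the paper's actual argument (upper density bound, tangent varifolds stationary for the frozen integrand, rigidity from (AC), Marstrand--Mattila). But the core step---deducing flatness of the tangent varifold from (AC)---is where your proposal is both more convoluted than necessary and has genuine gaps.

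\textbf{The double blow-up is unnecessary and obscures the argument.} You disintegrate the tangent varifold $C=\|C\|\otimes\nu_y$ and then blow up \emph{again} at a generic $y$. The cleaner route, and I believe the one taken in \cite{DePDeRGhi2}, is to apply Lebesgue--Besicovitch differentiation to the disintegration $V=\|V\|\otimes\mu_x$ \emph{before} taking the tangent: at $\|V\|$-a.e.\ $x_0$ the rescaled Grassmannian part converges to the constant $\mu_{x_0}$, so every tangent varifold at such $x_0$ has the frozen form $C=\sigma\otimes\mu_{x_0}$, where $\sigma$ is a tangent measure of $\|V\|$. With the Grassmannian part already constant, there is no second blow-up and no measurable disintegration of $C$ to worry about; the ``chief obstacle'' you flag at the end dissolves.

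\textbf{The dimension count is backwards and incomplete.} You write that a further blow-up ``produces a tangent measure that is invariant under translation along some subspace, and the divergence-free constraint forces $A$ to annihilate its orthogonal complement.'' The logic runs the other way: $\delta_{F^{x_0}}C=0$ with constant $\mu_{x_0}$ means $\int A_{x_0}(\mu_{x_0}):Dg\,d\sigma=0$ for all $g$, and testing with $g=\phi\,e_k$ shows that $\sigma$ is translation-invariant along the \emph{row space} of $A_{x_0}(\mu_{x_0})$. You then need the crucial, and unstated, dimensional observation: a Radon measure with positive lower and finite upper $d$-density cannot be translation-invariant along a subspace of dimension $>d$. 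This is what forces $\mathrm{rank}\,A_{x_0}(\mu_{x_0})\le d$, i.e.\ $\dim\ker A_{x_0}(\mu_{x_0})\ge n-d$, after which (AC)(i) gives equality and (AC)(ii) gives $\mu_{x_0}=\delta_{T(x_0)}$. As written, nothing in your argument pins down the dimension of your ``some subspace,'' so the inequality $\dim\ker\ge n-d$ does not follow.

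\textbf{Flatness of the tangent is not an afterthought.} Even granting $\nu_y=\delta_{T(y)}$ for $\|C\|$-a.e.\ $y$, your claim that ``rerunning the divergence-free argument forces $T(y)$ to be locally constant'' is not a proof; you have merely reduced the rectifiability of $V$ to the rectifiability of the tangent varifold $C$, which is circular. With the single-blow-up route this step is clean: once $\mu_{x_0}=\delta_{T(x_0)}$ and $\mathrm{rank}\,B_F(x_0,T(x_0))=d$, the measure $\sigma$ is invariant along a $d$-dimensional subspace $W$; the two-sided $d$-density bounds then force $\sigma=c\,\mathcal{H}^d\res W$. So every tangent measure of $\|V\|$ at $x_0$ is flat (indeed supported on the same plane $W$), and the Marstrand--Mattila criterion applies. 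Finally, a small point: the criterion you invoke requires all tangent measures to be flat, not that the flat tangent be unique, and your lower density assumption should read $\theta_*^d(\|V\|,x)\ge\theta_0$ rather than $\theta^{*d}$.
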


We can now state  the  main result of this paper:

\begin{theorem}\label{thm generale}
Let \(F\in C^1(G(\R^n))\) be an integrand satisfying~\eqref{cost per area},~\eqref{eq:ellitico} and the \(AC\) condition.
Let $H\subset \mathbb R^{n}$ be closed and $\mathcal{P} (H)$ be a good class. Assume that the infimum in the Plateau problem \eqref{plateau problem generale} is finite and let $\{K_j\}\subset \mathcal{P}(H)$ be a minimizing sequence. Then, up to subsequences, the measures $\mu_j := F(\cdot,T_{K_j}(\cdot))\H^d \res K_j$ converge weakly$^\star$ in $\mathbb R^{n}\setminus H$ 
to the measure $\mu = F(\cdot,T_K(\cdot)) \H^d \res K$, where $K = \spt\, \mu \setminus H$ is a  $d$-rectifiable set. 
Furthermore, the integral varifold naturally associated to $\mu$ is $F$-stationary in $\R^{n}\setminus H$. 
In particular, $\liminf_j\FF(K_j)\ge \FF(K)$ and if $K \in \mathcal{P} (H)$, then $K$ is a minimum for \eqref{plateau problem generale}.
\end{theorem}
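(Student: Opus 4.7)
The plan is to follow the compactness scheme from the authors' previous works, but with the key step of rectifiability routed through the new anisotropic Allard criterion (Theorem~\ref{thm:mainCPAM}). First, since $\mathcal{F}(K_j) \to m_0 < \infty$, the total masses $\mu_j(\R^n) \le \Lambda \mathcal{H}^d(K_j) \le (\Lambda/\lambda)\mathcal{F}(K_j)$ are uniformly bounded. So, up to a subsequence, $\mu_j \weak \mu$ on $\R^n \setminus H$ as Radon measures, and $\|\mu\|(\R^n\setminus H) \le m_0$. Similarly, one extracts a weak-$\star$ limit varifold $V$ from the natural varifolds $V_j = \mathcal{H}^d \res K_j \otimes \delta_{T_{K_j}(x)}$, so that $\|V\|$ and $\mu$ are mutually absolutely continuous (with density pinned between $\lambda$ and $\Lambda$). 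Set $K := \mathrm{spt}\,\mu \setminus H$.

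The main geometric input is a \emph{uniform lower density bound} on $\mu$: there exists $\theta_0 > 0$ so that $\mu(B_{x,r}) \ge \theta_0 r^d$ whenever $x \in K$ and $B_{x,r} \subset \R^n\setminus H$. To prove it, I would fix $x\in K$, choose $r$ for which one can slice cleanly (so that $\mu_j(\partial B_{x,r})=0$ and the slices $K_j \cap \partial B_{x,s}$ are $(d-1)$-rectifiable for a.e.\ $s<r$), and construct a deformed competitor in $\mathfrak{D}(x,r)$ that, inside $B_{x,r}$, maps $K_j$ onto a cone with vertex at a suitable point $y$ over the spherical slice. The Lipschitz-deformation definition allows this (project radially to $\partial B_{x,s}$ and then cone down), so the good-class inequality~\eqref{inf good class} yields
\begin{equation*}
\FF(K_j, B_{x,s}) \le \frac{\Lambda}{\lambda}\cdot\frac{s}{d}\FF(K_j, \partial B_{x,s}) + o(1),
\end{equation*}
which via a Gronwall/ODE argument on $s \mapsto \mu_j(B_{x,s})$ gives the polynomial lower bound $\mu_j(B_{x,s}) \ge c s^d$; passing to the limit produces the desired bound on $\mu$.

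Next, I would verify that $V$ has vanishing anisotropic first variation in $\R^n\setminus H$. Fix $g \in C_c^1(\R^n\setminus H;\R^n)$ supported in some $B_{x,r}\subset \R^n\setminus H$, and apply the good-class property~\eqref{inf good class} to the competitor $\varphi_t(K_j) = (\Id + tg)(K_j)$, which lies in $\D(x,r)$ for $|t|$ small. This yields
\begin{equation*}
\FF(K_j) \le \FF(\varphi_t(K_j)) + o_j(1),
\end{equation*}
and taking $j\to\infty$, dividing by $t$, and letting $t\to 0^+$ and $t\to 0^-$ separately gives $\delta_F V(g) = 0$. By the Almgren-type ellipticity~\eqref{eq:ellitico}, one can also compare $\FF(K_j)$ with the flat disk competitor to upgrade the argument and exclude degeneracies; in particular the quasiminimality~\eqref{quasiminimo} and the energy bound show that the support of $V$ is nowhere too wild.

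With the density lower bound and $F$-stationarity in hand, Theorem~\ref{thm:mainCPAM} applies directly to $V$ and yields that $V$ is a $d$-rectifiable varifold, supported on $K$. Writing $V = \theta\, \mathcal{H}^d\res K \otimes \delta_{T_K(x)}$, the ellipticity condition~\eqref{eq:ellitico} (a quasiconvexity in the geometric sense, \`a la Almgren) forces $\theta \equiv 1$ on $K$: in a ball where $V$ has multiplicity greater than one, one could use a flattening deformation from $\D(x,r)$ (modulo the good-class flexibility) to produce a strictly lower-energy competitor, contradicting the minimizing property of $\{K_j\}$ after passing to the limit. This identifies $\mu = F(\cdot, T_K(\cdot))\,\mathcal{H}^d\res K$. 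Lower semicontinuity $\mathcal{F}(K) \le \liminf_j \mathcal{F}(K_j) = m_0$ is then immediate from $\mu_j \weak \mu$ and the representation of $\mu$; if in addition $K \in \mathcal{P}(H)$, minimality of $K$ follows at once.

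The most delicate step is the density lower bound: controlling the spherical slice energy $\FF(K_j,\partial B_{x,s})$ by a differentiated quantity in $s$ without a monotonicity formula, and then constructing an honest element of $\D(x,r)$ that realises the cone competitor while staying in the good class. The identification $\theta\equiv 1$ via ellipticity is the other subtle point, since it requires that the flattening deformation lie in the sequential $W^{1,\infty}$-closure of $\mathfrak{D}(x,r)$.
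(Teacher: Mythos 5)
Your overall strategy tracks the paper's proof closely: extract a weak-$\star$ limit measure $\mu$ and limit varifold $V$; establish a uniform lower density bound; show $V$ is $F$-stationary by deforming with $\Id+tg$ and invoking the good-class inequality; invoke Theorem~\ref{thm:mainCPAM} to get rectifiability; and then identify the density of $\mu$ using ellipticity. The first three steps are essentially the paper's Steps~1--3, and your sketches for them are adequate (the cone-competitor/Gronwall argument you describe for the density lower bound is indeed the mechanism used in the cited reference, and the $t\to 0^{\pm}$ argument for stationarity is a legitimate variant of the paper's contradiction argument).

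The genuine gap is in the final step, the identification $\mu = F(\cdot,T_K(\cdot))\,\H^d\res K$. You write that ellipticity "forces $\theta\equiv 1$" because, where $V$ had multiplicity greater than one, a flattening deformation would produce a cheaper competitor. This sketch only addresses the \emph{upper} bound $\theta(x)\le F(x,T_K(x))$ (the paper's Step~5, which indeed uses the collapsing map $P$ together with quasiminimality~\eqref{quasiminimo}). It says nothing about the \emph{lower} bound $\theta(x)\ge F(x,T_K(x))$, which is the direction equivalent to the lower semicontinuity claim $\liminf_j\FF(K_j)\ge\FF(K)$ that the theorem asserts, and it is the harder half. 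To apply the ellipticity inequality~\eqref{eq:ellitico} to the flattened competitor one must first verify the hypothesis $D\subset\pi_T(K)$, i.e.\ that the flattened $\widetilde{K_j}$ projects \emph{onto the full $d$-disk}. That covering property (\eqref{pippo} in the paper's Step~4) is proved by a separate contradiction argument: if a hole existed, one could retract $K_j'$ off the tangent disk onto $\partial Q^d_{(1-\sqrt\e)r}$, annihilating essentially all the mass in $Q_{(1-\sqrt\e)r}$ and violating the density lower bound via the minimizing property. Your proposal does not mention this no-holes/retraction argument at all, and without it ellipticity cannot be applied and the lower bound on $\theta$ cannot be obtained. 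You should state and prove both directions separately, with the covering step made explicit. (A smaller imprecision: the $\theta$ appearing in $\mu=\theta\,\H^d\res K$ equals $F(x,T_K(x))$, not $1$; the quantity that should equal one is the density of $\|V\|$, and conflating the two obscures which inequality you are proving.)
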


\begin{remark}\label{rmk:onesto}
Our proof of Theorem \ref{thm generale} strongly relies on  Theorem \ref{thm:mainCPAM} for which the \(AC\) condition is essentially necessary, see the discussion in \cite{DePDeRGhi2}. In the case when   \(d=n-1\) (or if \(d=1\)) \(AC\) condition is equivalent to the  the strict convexity of $F$ and thus  it implies~\eqref{eq:ellitico} with \(\Gamma=0\), see \cite[Theorem 1.3]{DePDeRGhi2}. When \(2\le d\le (n-1)\) the situation is more subtle and a complete   characterization is not  known.  However, in  the recent work \cite{DK} of the second author with S. Kolasinski  (completed while the current one was submitted) it is shown  that  the  $AC$ condition implies the validity of~\eqref{eq:ellitico} with \(\Gamma=0\), see  \cite[Theorem 8.9]{DK}. Hence to prove Theorem  \ref{thm generale} one can just assume that \(F\) satisfies the \(AC\) condition. 
\end{remark}

\begin{remark}
For the boundary conditions considered in \cite{HarrisonPugh14} and in \cite{davidplateau,DavidBeginners}, the corresponding classes $\mathcal{P} (H)$ are all good, as proved in the previous works \cite{DelGhiMag,DePDeRGhi,DeLDeRGhi16} . By the same arguments in these papers, one can also show that the set  $K$ in Theorem \ref{thm generale} is actually a minimum  for the formulation in \cite{HarrisonPugh14}, see \cite[Theorem 1.5]{DePDeRGhi} (here restricting our deformation to those which can be obtained as limit of orientation preserving diffeomorphism turns out to be crucial).  In this paper  we extend this analysis to provide a new proof of Reifenberg existence theorem,\cite{reifenberg1},    which does not rely on his topological disk theorem, see Section \ref{s:Reif} for the precise setting and Theorem \ref{thm:reif} there.
\end{remark}

\begin{remark}
We observe that in case the set $K$ provided by the Theorem \ref{thm generale} belongs to $\mathcal P(H)$, it has minimal $\FF$ energy with respect to deformations in the classes $\D(x,r)$ of Definition \ref{d:deform}, with $x \in K$ and $H \cap B_{x,r}=\emptyset$.  
Although the union of these classes is strictly contained in the class of all Lipschitz deformations, 
it is rich enough to generate the comparison sets in \cite{Almgren76}, Hence if one assumes that \eqref{eq:ellitico} is true with some \(\Gamma>0\),  one obtains almost everywhere regularity of $K$,   see \cite[III.1 and III.3]{Almgren76}.  
\end{remark}

\begin{ack}
G.D.P. is supported by the MIUR SIR-grant {\it Geometric Variational Problems} (RBSI14RVEZ). A.D.R. is supported by SNF 159403 {\it Regularity questions in geometric measure theory}. 
F.G. is supported by the ERC Starting Grant {\it FLIRT - Fluid Flows and Irregular Transport}. 
\end{ack}

\section{Proof of Theorem \ref{thm generale}}\label{mainresult}
The proof of Theorem \ref{thm generale} shares some arguments with the results in~\cite{DePDeRGhi}: 
in order to keep the outline simple we will sometimes refer to that paper for the common parts, while presenting the original content of the proof in detail.  

We will use the following  notations:  $Q_{x,l}$  denotes the closed cube centered in $x$, 
with edge length $l$;  
\begin{equation*}\label{rettangoli}
R_{x,a,b}:=x+\Big[-\frac a2,\frac a2\Big]^d \times \Big[-\frac b2,\frac b2\Big]^{n-d}
\end{equation*}
denotes the ``rectangle'' with sides \(a\) and \(b\), in the splitting $\R^n = \R^d\times\R^{n-d}$ (hence \(Q_{x,l}=R_{x,l,l}\)). 
 Cubes and balls in the subspace $\R^d\times\{0\}^{n-d}$ are denoted by $Q_{x,l}^d$ and $B_{x,r}^{d}$, respectively.

\begin{proof}[Proof of Theorem \ref{thm generale}]
Since the infimum in the Plateau problem \eqref{plateau problem generale} is finite,  there exists a minimizing sequence $\{K_j\}\subset \mathcal{P}(H)$ and a Radon measure $\mu$ on $\R^n\setminus H$ such that
\begin{equation}\label{muj va a mu}
  \mu_j\weak\mu\,,\qquad\mbox{as Radon measures on $\R^n\setminus H$}\,,
\end{equation}
where $\mu_j=F(\cdot,T_{K_j}(\cdot))\H^d \res K_j$. We set $K = \spt\, \mu \setminus H$ and 
consider also the canonical density-one rectifiable varifolds $V^j$ naturally associated to $K_j$:
\[
V^j :=\mathcal H^d\res K_j \otimes \delta_{T_xK_j}.
\]
Since $K_j$ is a minimizing sequence in \eqref{plateau problem generale} and $F \geq \lambda$, 
 for large $j$ we have the bound\footnote{Here \(\|V\|\) is the projection of the measure  \(V\) on the first factor, i.e. \(\|V\|(A)=V(A\times \G)\) for every Borel set \(A\subset \R^d\)}  
$\|V^j\|(\R^n)\leq \frac{2m_0}{\lambda}$ and therefore we can assume that  $V^j$ converges to $V$ in the sense of varifolds.

\medskip
\noindent
{\em Step 1:  $V$ is $F$-stationary in $\mathbb R^{n}\setminus H$.} 
Assume by contradiction the existence of $g\in C^1_c(\R^n\setminus H,\R^n)$ such that $\delta_FV(g) <0$. 
 This information can be localized to a ball using a partition of unity for the compact set $\spt($g$)\subset\subset\R^n\setminus H$: we can assume that for some  ball $B_{x,r}\cc \R^n\setminus H$ there is a vector field (not relabeled) $g\in C^1_c(B_{x,r},\R^n)$ such that $\delta_FV(g) =: -2c<0$.
There exists $s>0$ such that $(\Id + tg)\in \D(x,r)$ for every $t\leq s$. By continuity of the functional $\delta_F Z(g)$ with respect to $Z$, up to consider a smaller $s>0$, it holds
$$\delta_F (\Id + tg)^{\#}V(g)\leq -c <0, \quad \forall t\in [0, s].$$
Integrating the last inequality in $t$, we conclude that
\begin{equation}\label{integro1}
\FF((\Id + sg)^{\#}V)\leq \FF(V) -cs.
\end{equation}
Moreover there exists an open set $A$ with $B_{x,r}\subset A\subset \subset \R^n \setminus H$ and satisfying 
\begin{equation}\label{11}
\|V\|(\partial A)=\|(\Id + sg)^{\#}V\|(\partial A)=0.
\end{equation}
Since $\Id + sg \in C^1_c(A)$, then \eqref{integro1} reads
\begin{equation}\label{22}
\FF ((\Id + sg)^{\#}V, A) \leq \FF(V,A)-cs  .
\end{equation}
Combining \eqref{11} with \eqref{22}, for $j$ large enough we infer 
\begin{equation}\label{55}
\FF ((\Id + sg)^{\#}V^j, A) \leq \FF(V^j,A) -cs  +o_j, \qquad \mbox{where $\lim_{j\to \infty} o_j=0$},
\end{equation}
see also \cite[Section 5.1]{DeRosa} for more details. Note that $\FF ((\Id + sg)^{\#}V^j, A) =\FF ((\Id + sg)(K_j), A)$ as well as $\FF(V^j,A)= \FF (K_j, A)$.
Adding $\FF(K_j,\R^n\setminus A)$ to both sides of \eqref{55} and observing that $(\Id + sg)(K_j)\setminus A =K_j\setminus A$, we obtain
\begin{equation}\label{33}
\FF ((\Id + sg)(K_j), \R^n) \leq \FF(K_j,\R^n) -cs  + o_j(1).
\end{equation}
Since $\Id + sg \in \D(x,r)$ and $B_{x,r}\cc \R^n\setminus H$, by definition of good class, see Definition \ref{def good class}, there exists a new sequence $\{\tilde K_j\}_{j\in \N}\subseteq \mathcal{P} (H)$, such that
\begin{equation}\label{44}
\FF(\tilde K_j)\leq \FF((\Id + sg)(K_j)) + \frac{cs}{2}, \qquad \forall j \in \N.
\end{equation}
Combining \eqref{33} with \eqref{44} and choosing $j$ big enough, we contradict the minimizing property of  the sequence $\{K_j\}_{j\in \N}$. 

\medskip
\noindent
{\em Step 2:  $V$ satisfies a density lower bound}.  We claim that there exists  $\theta_0=\theta_0(n,d,\lambda,\Lambda)>0$ such that
\begin{equation}
  \label{lower density estimate mu ball}
 \|V\|(B_{x,r})\ge\theta_0\,\omega_d r^d\,,\qquad \textrm{ \(x\in\spt\,\|V\| \) and \( r<d_x :=\dist(x,H)\)}.
\end{equation}
This can be achieved by the same techniques of~\cite[Theorem 1.3, Step 1]{DePDeRGhi}. Indeed  by~\eqref{cost per area} the integrand \(F\) is comparable to the area and this is the only property needed in the proof in~\cite{DePDeRGhi}, see also~\cite{DeRosa}.

\medskip
\noindent

{\em Step 3: \(V\) is rectifiable}. Combining the lower bound~\eqref{lower density estimate mu ball} with the $F$-stationarity in $\R^n\setminus H$ and applying 
Theorem~\ref{thm:mainCPAM}  
we conclude that $V$ is a $d$-rectifiable varifold and in turn that $\mu = \FF(V,\cdot )=\theta \H^d \res \tilde{K}$ for some countably $\H^d$-rectifiable set $\tilde{K}$ and some positive Borel function $\theta$.
Since $K$ is the support of $\mu$, then $\H^d (\tilde{K}\setminus K) =0$. 
On the other hand, by differentiation of Hausdorff measures,  \eqref{lower density estimate mu ball} yields $\H^d (K\setminus \tilde{K}) =0$. Hence $K$ is  \(d\)-rectifiable and 
\begin{equation}\label{supporto}
\mu=\theta \H^d\res K.
\end{equation}

We now proceed to compute the exact value of the density $\theta$: 
to this end we need the following elementary lemma, whose proof can be obtained as in~\cite[Lemma 3.2]{DeLDeRGhi16}.

\begin{lemma}\label{l:lista}
Let $K$ be the $d$-rectifiable set obtained in Step 3. 
For every $x$ where $K$ has an approximate tangent plane $T_K (x)$, let $O_x$ be a special orthogonal transformation of $\mathbb R^{n}$ mapping $\{x_{d+1}=\dots=x_n=0\}$ onto $T_K (x)$ and set
$\bar{Q}_{x,r} = O_x (Q_{x,r})$ and $\bar R_{x,r,\e r} = O_x (R_{x, r , \e r})$. 
At almost every $x\in K$ the following holds: for every $\e>0$ there exist $r_0=r_0(x) \leq\tfrac{1}{\sqrt{n+1}} \dist(x,H)$ such that, for $r\leq r_0 /2$, 
\begin{gather}
(\theta_0 \omega_d-\e)r^d\leq \mu(B_{x,r})\le (\theta(x)\omega_d+\e)r^d,\qquad
(\theta(x)-\e)r^d< \mu(\bar{Q}_{x,r})< (\theta(x)+\e)r^d, \label{uno}\\
\sup_{y\in B_{x,r_0}, \,S\in G}|F(y,S) - F(x,S)|\leq \e,\label{tre}
\end{gather}
where $\theta_0=\theta_0(n,d,\lambda,\Lambda)$ is the universal lower bound obtained in \eqref{lower density estimate mu ball}.
Moreover, for almost every such $r$, there exists  $j_0(r) \in \N$ such that for every $j\geq j_0$:
\begin{gather}
(\theta(x)\omega_d-\e)r^d \le\FF(K_j,B_{x,r})\leq (\theta(x) \omega_d + \e ) r^d,\label{due}\\
(\theta(x)-\e)r^d\leq \FF(K_j,\bar{Q}_{x,r})\leq (\theta(x)+\e)r^d, \qquad \FF(K_j,\bar{Q}_{x,r}\setminus \bar{R}_{x,r,\e r})<\e r^d \label{quattro}.
\end{gather}
\end{lemma}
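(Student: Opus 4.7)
The plan is to first identify a full-$\mu$-measure subset of ``good'' points $x\in K$ at which all the requisite differentiation and tangent-plane properties hold, then for each such $x$ fix $r_0(x)$ small enough so that all the $\mu$-estimates in the statement hold simultaneously, and finally transfer these $\mu$-estimates to estimates on $\mu_j=\FF(K_j,\cdot)$ via the weak$^*$ convergence \eqref{muj va a mu}.

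First I would fix a full-$\mu$-measure set $E\subset K$ of points $x$ at which (i) the approximate tangent plane $T_K(x)$ to $K$ exists, which holds $\H^d$-a.e.\ by $d$-rectifiability of $K$; (ii) the density $\theta(x):=\lim_{r\downarrow 0}\mu(B_{x,r})/(\omega_d r^d)$ exists, is finite, and coincides with the density in \eqref{supporto}, which is true $\mu$-a.e.\ by standard differentiation for rectifiable measures (cf.\ \cite[Chapter 3]{SimonLN}); and (iii) $\theta(x)\ge\theta_0$, an immediate consequence of \eqref{lower density estimate mu ball}. For each such $x$ I would then pick $O_x\in\SO(n)$ as required by the statement.

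Next I would fix $x\in E$ and $\varepsilon>0$ and choose $r_0(x)\le (n+1)^{-1/2}\dist(x,H)$ small enough so that, for every $r\le r_0/2$, all of the following hold: (a) the oscillation bound \eqref{tre}, from uniform continuity of $F$ on the compact set $\overline{B_{x,r_0}}\times G$; (b) the two-sided bound $(\theta(x)\omega_d-\varepsilon)r^d\le\mu(B_{x,r})\le(\theta(x)\omega_d+\varepsilon)r^d$, directly from the existence of $\theta(x)$, which in particular implies the $\theta_0$-lower bound in \eqref{uno}; (c) the estimate $(\theta(x)-\varepsilon)r^d<\mu(\bar Q_{x,r})<(\theta(x)+\varepsilon)r^d$, which follows by a blow-up argument from the approximate tangent plane property and $\H^d(Q^d_{0,1})=1$; and (d) the concentration bound $\mu(\bar Q_{x,r}\setminus\bar R_{x,r,\varepsilon r})<\varepsilon r^d/2$, again by the same blow-up, noting that $T_K(x)\cap \bar Q_{x,1}$ is contained in the slab $\bar R_{x,1,\varepsilon}$ so the limit measure on $\bar Q_{0,1}\setminus\bar R_{0,1,\varepsilon}$ vanishes.

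Finally, for the estimates involving the $K_j$'s, I would observe that, for the fixed center $x$ and orientation $O_x$, the function
$$r\mapsto \mu(\partial B_{x,r})+\mu(\partial\bar Q_{x,r})+\mu(\partial\bar R_{x,r,\varepsilon r})$$
vanishes off an at most countable set of radii, because $\mu$ is finite on $\overline{B_{x,r_0/2}}$ and for each of the three families the boundaries corresponding to distinct $r$ are pairwise disjoint. For every such ``good'' $r\le r_0/2$, the weak$^*$ convergence \eqref{muj va a mu} yields $\mu_j(A)\to\mu(A)$ for $A\in\{B_{x,r},\bar Q_{x,r},\bar Q_{x,r}\setminus\bar R_{x,r,\varepsilon r}\}$; since $\mu_j(A)=\FF(K_j,A)$ by construction, the bounds \eqref{due} and \eqref{quattro} follow for every $j\ge j_0(r)$, after an innocuous relabeling of $\varepsilon$. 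The only genuinely non-routine steps are (c) and (d), i.e.\ quantifying the flatness of $K$ at small scales on cubes and on thin slabs along $T_K(x)$; this is carried out exactly as in \cite[Lemma 3.2]{DeLDeRGhi16}, using only the approximate tangent plane property and the value of $\theta(x)$.
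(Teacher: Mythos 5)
Your proposal is correct and follows essentially the same approach the paper intends: the paper simply points to \cite[Lemma 3.2]{DeLDeRGhi16} for this lemma, and your outline (full-measure set of differentiation/tangent-plane points, blow-up to get the cube and slab $\mu$-estimates, then transfer to $\mu_j=\FF(K_j,\cdot)$ via weak$^\star$ convergence at radii where $\mu$ charges none of the relevant boundaries) is precisely the argument of that reference, which you yourself cite at the end.
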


We are now ready to complete the proof of Theorem \ref{thm generale}, namely to show $\liminf_j\FF(K_j)\ge \FF(K)$ and that  \(\mu=F(x,T_K(x))\H^d \res K\).\\

\noindent
{\em Step 4: $\theta(x) \geq F(x,T_K(x))$  for almost every $x\in K$}. 
We prove that $\theta(x)\ge F(x,T_K)$ for every $x\in K$ that satisfies the properties of Lemma~\ref{l:lista} (thus, $\H^d$-a.e. on $K$). 
Let us fix $\e<r_0/2$ and choose a radius $r<r_0/2$ such that both $r$ and $(1-\sqrt{\e})r$ 
satisfy properties \eqref{uno}-\eqref{quattro} and \( \mu(\pa Q_{r}) = 0\).
Let us assume without loss of generality that \(x=0\) and $T_K = \{x_{d+1}= ... = x_n =0\}$. 
Since the \(\mu_j\) are weakly converging to \(\mu\), we get that for \(j\ge j(r)\)
\begin{equation}\label{eq:10}
 \mu_j(Q_{r} \setminus R_{r,\e r}) =o_j(1). 
\end{equation}

First of all we deform $K_j$ so that the mass in \eqref{eq:10} is zero: 
by repeating verbatim the argument in \cite[Theorem 1.3, Step 4]{DePDeRGhi}, we construct a map
\(\psi \in \D(0,2r)\) such that, setting $\widetilde{K_j}:=\psi(K_j)$, we have (see \cite[Equation (3.16)]{DePDeRGhi})
\begin{equation}\label{em}
\H^d(\widetilde{K_j}\cap (Q_{r(1-\sqrt{\e})}\setminus R_{r(1-\sqrt{\e}),6\e r}))=0,
\end{equation} 
and consequently (see \cite[Equation (3.18)]{DePDeRGhi})
\begin{equation}
\begin{split}
\FF\left (K_j,B_{r(1-\sqrt{\e})}\right )&\geq \FF(\widetilde{K_j},B_{r(1-\sqrt{\e})}) -C\sqrt{\e}r^d. \label{ciaone}
\end{split}
\end{equation}
We claim that 
\begin{equation}\label{pippo}
B_{r(1-\sqrt{\e})} \cap T_K \subset \pi(\widetilde{K_j}\cap R_{r(1-\sqrt{\e}), 6 \e r})
\end{equation}
 where $\pi:\R^n \to T_K$  is the orthogonal projection of $\R^n$ onto $T_K$. Note that this immediately implies that
 \begin{equation*}
\overline{B_{r(1-\sqrt{\e})}} \cap T_K \subset \pi(\widetilde{K_j}\cap \overline{B_{r(1-\sqrt{\e})}}).
\end{equation*}
 Therefore the ellipticity of $F$ (namely \eqref{eq:ellitico}), \eqref{tre}, \eqref{due}, \eqref{quasiminimo} and \eqref{lower density estimate mu ball} imply that
\begin{equation}
\label{cip}
\FF^0(T_K,B_{r(1-\sqrt{\e})})\leq \FF^0(\widetilde{K_j},B_{r(1-\sqrt{\e})})\leq \FF(\widetilde{K_j},B_{r(1-\sqrt{\e})}) + C\e r^d,
\end{equation}
hence
\begin{equation*}
\begin{split}
\theta(0)\omega_dr^d(1-\sqrt{\e})^d& \overset{\eqref{due}}{\geq}  \FF(K_j,B_{r(1-\sqrt{\e})}) -\e r^d \overset{\eqref{ciaone}}{\geq} \FF(\widetilde{K_j},B_{r(1-\sqrt{\e})}) -C\sqrt{\e}r^d \\
& \overset{\eqref{cip}}{\geq}\FF^0(T_K,B_{r(1-\sqrt{\e})})-C\sqrt{\e}r^d 
 = F(0,T_K)\omega_d r^d(1-\sqrt{\e})^d -C\sqrt{\e}r^d 
\end{split}
\end{equation*}
which yields $\theta(0)\geq F(0,T_K)$.

We are thus left to prove \eqref{pippo}. To this end, as in \cite[Theorem 1.3, Step 4]{DePDeRGhi}, we extend $\pi_{|R_{r(1-\sqrt{\e}),6 \e r}}$ to a map $P \in \D(0,2r)$ that collapses $R_{r(1-\sqrt{\e}),6\e r}$  onto $T_K$, is the identity outside $Q_{r}$ and satisfies \({\rm Lip}  \,P\le 1+C\sqrt{\e}\) for some dimensional constant \(C\).  

We now set $K_j':=P(\widetilde{K_j})=(P\circ \psi) (K_j)$. Thanks to \eqref{em},
\begin{equation}\label{empty}
\H^d\Big(K_j'\cap \big(Q_{(1-\sqrt{\e})r}\setminus Q_{(1-\sqrt{\e})r}^d\big)\Big)=0;
\end{equation}
furthermore the
estimates  \eqref{quattro} and \eqref{quasiminimo} imply also
\begin{equation}\label{franco}
\begin{split}
\FF\left (K'_j,Q_{r}\setminus Q_{r(1-\sqrt{\e})}\right )&\leq C(\Lambda, \lambda, P) \FF\left (K_j,Q_{r}\setminus Q_{r(1-\sqrt{\e})}\right )\leq C\sqrt{\e} r^d. 
\end{split}
\end{equation}

Assume now by contradiction that \eqref{pippo} fails. 
Since $K_j'$ is closed and $P = \pi$ in $R_{r(1-\sqrt{\e}),6\e r}$, there would exist $y'_j \in Q_{(1-\sqrt{\e})r}^d$ and $\delta_j >0$ such that, if we set $y_j:=(y'_j, 0)$, then
\begin{equation}\label{eq:11}
K_j' \cap B^{d}_{y_j,\delta_j} = \emptyset \quad \mbox{and} \quad B^{d}_{y_j,\delta_j}\subset Q_{(1-\sqrt{\e})r}^d.
\end{equation}
We now consider the  deformation map $\varphi_j $ 
retracting $Q_{(1-\sqrt{\e})r}^d \setminus B^{d}_{y_j,\delta_j}$ onto the equator $\partial Q_{(1-\sqrt{\e})r}^d$ and not 
moving $Q_{(1-\sqrt{\e})r}^c$,    
constructed in   \cite[Theorem 1.3, Step 4]{DePDeRGhi}. In particular,  letting  $K''_j:=\varphi_j(K_j')$,  the following properties are satisfied: 
\[
\varphi_j(Q_{(1-\sqrt{\e})r}^d) \subset \pa Q_{(1-\sqrt{\e})r}^d
\]
and
\[
K''_j \setminus Q_{r} = K_j \setminus Q_{r}.
\]
Thanks to \eqref{empty} we obtain:  
\begin{equation}\label{:)}
\H^d(K_j''\cap Q_{r(1-\sqrt{\e})})=0.
\end{equation}
By the very definition of  $\P(H)$ we can find  $J_j\in \P(H)$, $\e r^d$-close in energy to $K''_j$. 
Since $K'_j$ and $K''_j$ coincide outside $Q_{(1-\sqrt{\e})r}$, we then conclude that:
\begin{equation*}
\begin{split}
\FF(K_j)-\FF(J_j) &\, \,\geq \FF\left (K_j,Q_r\right )-\FF\left (K''_j,Q_r\right ) - \e r^d \\ 
&\overset{\eqref{:)}}{\geq}  \FF(K_j,Q_{r(1-\sqrt{\e})}) + \FF\left (K_j,Q_{r}\setminus Q_{r(1-\sqrt{\e})}\right ) -\FF\left (K'_j,Q_{r}\setminus Q_{r(1-\sqrt{\e})}\right ) - \e r^d \\ 
&\overset{\eqref{franco}}{\geq} (\theta(0)-\e)(1-\sqrt{\e})^dr^d- C\sqrt{\e} r^d\geq (\theta(0) - C\sqrt{\e})r^d >0 .
\end{split}
\end{equation*}
Since this is in contradiction with the minimizing property of the sequence $\{K_j\}$, we conclude the proof of \eqref{pippo}.

\medskip

\noindent
{\em Step 5: $\theta(x)\leq F(x,T_K(x))$ for almost every $x\in K$}: Again we assume that \(x=0\) is a point  satisfying  the conclusion  of Lemma~\ref{l:lista} and we argue as in  \cite[Theorem 1.3, Step 5]{DePDeRGhi}.

Arguing by contradiction, we assume that $\theta(0)=F(0,T_K(0))+\sigma$ for some  $\sigma>0$ and let $\e<\min\{\tfrac{\sigma}{2},\frac{\lambda\s}{ 4\Lambda}\}$. 
As a consequence of \eqref{quattro}, there exist $r$ and $j_0=j_0(r)$ such that
  \begin{equation}
    \label{buonpunto}
      \FF(K_j,Q_{r})>\Big(F(0,T)+\frac{\s}2\Big)\,r^d\,,\qquad \FF(K_j,Q_{r}\setminus R_{r,\e r})<\frac{\lambda\s}{ 4\Lambda}\,r^d,\qquad\forall j\ge j_0\,.
  \end{equation}
 Consider the map $P\in \D(0,r)$  defined in \cite[Equation 3.14]{DePDeRGhi} which collapses \(R_{r(1-\sqrt{\e}),\e r}\) onto the tangent plane $T_K$ and satisfies  $\|P - Id\|_\infty +\Lip (P - Id)\le C\,\sqrt\e$. Exploiting the fact that \(\mathcal P(H)\) is a good class and by almost minimality of $K_j$, we find that
  \begin{equation*}
  \begin{split}
    \FF(K_j,Q_{r})-o_j(1) &\le 
    \underbrace{\FF( P(K_j),P(R_{(1-\sqrt{\e})r,\e r}))}_{I_1}
    +
    \underbrace{\FF( P(K_j),P(R_{r,\e r}\setminus R_{(1-\sqrt{\e})r,\e r}))}_{I_2} \\
    &+
    \underbrace{\FF( P(K_j),P(Q_{r}\setminus R_{r,\e r}))}_{I_3}\,.
    \end{split}
  \end{equation*}
  By the properties of $P$ and \eqref{tre}, we get $I_1\le (F(0,T_K)+\e) r^d$, while, by \eqref{buonpunto} and equation \eqref{quasiminimo}
  \[
  I_3\leq \frac{\Lambda}{\lambda}\, (\Lip P)^d\,  \FF(K_j,Q_{r}\setminus R_{r,\e r})<(1+C\,\sqrt\e)^d\,\frac{\s}{4 }\,r^d\,.
  \]
  Since $\FF( P(K_j),P(R_{r,\e r}\setminus R_{(1-\sqrt{\e})r,\e r}))\leq \frac{\Lambda}{\lambda}(1+C\sqrt{\e})^d\FF(K_j,R_{r,\e r}\setminus R_{(1-\sqrt{\e})r,\e r})$ and $ R_{r,\e r}\setminus R_{(1-\sqrt{\e})r,\e r}\subset Q_{(1-\sqrt\e)r}\setminus  Q_{r}$, by \eqref{quattro} we can also bound
 \begin{multline*}
  I_2=\FF( P(K_j),P(R_{r,\e r}\setminus R_{(1-\sqrt{\e})r,\e r}))  \le\frac{\Lambda}{\lambda}(1+C\sqrt{\e})^d\FF(K_j,Q_{r}\setminus Q_{(1-\sqrt\e)r})
    \\
    \le C(1+C\sqrt{\e})^d\Big((F(0,T_K)+\s+\e)-(F(0,T_K)+\s-\e)(1-\sqrt\e)^d\Big)\,r^d 
    \le C\sqrt{\e}r^d.
\end{multline*}

Hence, as $j\to\infty$, by \eqref{uno}
  \[
  \Big(F(0,T_K)+\frac{\s}2\Big) r^d\le (F(0,T_K)+\e) r^d +C\sqrt{\e} r^d+(1+C\,\sqrt\e)^d\,\frac{\s}{4}\,r^d\,:
  \]
 dividing by $r^d$ and letting $\e\downarrow 0$ provides the desired contradiction. 
 
 \bigskip
 Combining \eqref{supporto} with Step 4 and Step 5, we conclude the proof of the theorem.
\end{proof}

\section{Solution of the Plateau problem in the Reifenberg's formulation}\label{s:Reif}
The Reifenberg formulation of the Plateau problem in \cite{reifenberg1} 
involves an algebraic notion of boundary described in terms of \v{C}ech homology groups. 
The particular choice of an homology theory defined on compact spaces and with coefficient groups that are abelian and compact 
has three  motivations: 
\begin{itemize}
\item[(i)] These assumptions imply that the homology groups are continuous with respect to the Hausdorff convergence of the spaces.
\item[(ii)] It satisfies the classical axioms of Eilenberg and Steenrod, enabling the use of the Mayer-Vietoris exact sequence, \cite[Chapter X]{ES}.
\item[(iii)] It ensures the crucial property
$$\H^{\ell}(K)=0\Rightarrow \check{H}_{\ell}(K)=0,$$
which may fail in general for other homology theories, \cite{milnor62}.
\end{itemize}
 In this section we follow Reifenberg's approach and show that we can obtain a minimizing set in the chosen homology class.  
 
 \medskip

Let $\G$ be a compact Abelian group and let $K$ be a closed set in $\R^n$. 
 For every $m\geq 0$ we denote with $\check{H}_m(K;\G)$ (often omitting the explicit mention of the group $\G$)
 the $m^\text{th}$-\v{C}ech homology group of $K$ with coefficients in $\G$, \cite[Chapter IX]{ES}.

Recall that, if $H \subset K$ is a compact set, the inclusion map $i_{H,K}: H \to K$
induces a graded homomorphism between the homology groups (of every grade $m$, again often omitted)
$$ i_{*\,H,K} :\check{H}_{m} (H,\G)\to \check{H}_{m} (K,\G).$$ 
Note, in the next definition, the role of the dimension $d$, inherent of our variational problem.

\begin{definition}[Boundary in the sense of Reifenberg]\label{Reif}
Let $\G,H,K$ be as above and let $L \subset \check{H}_{d-1} (H,\G)$ be a subgroup. 
We say that $K$ has boundary $L$ if 
\begin{equation}\label{eq:ker}
\Ker (i_{*\,H,K} )\supset L.
\end{equation}
\end{definition} 

\begin{definition}[Reifenberg class]\label{Reifclass}
Let $\G,H,K,L$ be as above; we let $\mathcal{R} (H)$ be the 
class of closed $d$-rectifiable subsets $K$ of $\R^n \setminus H$ contained in the same ball $B\Supset H$ and
such that $K\cup H$ has boundary $L$ in the sense of Definition \ref{Reif}.
\end{definition} 
\begin{remark}\label{remReif}
We remark that $\mathcal{R} (H)$ is a good class in the sense of Definition \ref{def good class}. 
Indeed for every $K \in \mathcal{R} (H)$, 
every $x\in K$, $r\in (0, \dist (x, H))$ and $\varphi \in \D(x,r)$ (which is in particular continuous), 
$$\varphi (K\cup H)=\varphi (K)\cup H, $$ 
and moreover by functoriality $\Ker(\varphi\circ i_{H,K\cup H})_* \supset \Ker (i_{*\,H,K\cup H}) \supset L$, 
which implies that $\varphi(K)\in \mathcal{R} (H)$.
We can therefore apply Theorem \ref{thm generale} to the class $\mathcal{R} (H)$ to  obtain the existence of a relatively closed subset $K$ of $\R^n\setminus H$ satisfying
\[
\mathbf F(K) = \inf_{S\in \mathcal{R} (H)}\{ \mathbf F(S) \}.
\]
\end{remark}

We address now the question of whether 
$K$ belongs to the Reifenberg class $\mathcal{R} (H)$.
Recall the definition of Hausdorff distance between two compact sets $C_1,C_2 \subset \R^n$:
\[
d_{\mathcal H} (C_1,C_2) :=\inf \{r>0 : C_1\subset U_r(C_2) \mbox{ and } C_2\subset U_r(C_1)\},
\]
where we denote with $U_r(C)$ the $r$-neighborhood of $C$.

The main result of this section is the following theorem:


\begin{theorem}\label{thm:reif}
For every minimizing sequence $\{K_j\}\subset \mathcal{R}(H)$
the associated limit set given by Theorem \ref{thm generale} satisfies $K \in \mathcal{R}(H)$.
\end{theorem}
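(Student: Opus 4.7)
The plan is to verify the two defining properties of the Reifenberg class $\mathcal{R}(H)$ for the limit set. The $d$-rectifiability and the containment in $\overline{B}$ come directly from Theorem \ref{thm generale} together with the fact that each $K_j$ lies in the fixed ball $B$, so the only substantive task is to show that $K\cup H$ has boundary $L$ in the sense of Definition \ref{Reif}.

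First, since the sets $K_j \cup H$ are compact and uniformly contained in $\overline{B}$, by Blaschke's selection theorem I may pass to a subsequence along which $K_j \cup H$ converges in the Hausdorff metric to a compact set $K^*\subset \overline{B}$ with $H\subset K^*$. Invoking the continuity of \v{C}ech homology with respect to Hausdorff convergence of compact spaces (property (i) recalled at the beginning of this section), the inclusion-induced vanishing transfers to the limit: for every $\alpha\in L$ one has $i_{*\,H,K_j\cup H}(\alpha)=0$ for all $j$, hence $i_{*\,H,K^*}(\alpha)=0$, so $K^*$ itself has boundary $L$.

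The crucial step is then to identify $K^*$ with $K\cup H$. The inclusion $K\cup H \subset K^*$ is immediate: $H\subset K^*$ by construction, and any $x\in K=\spt\,\mu\setminus H$ has $\mu(B_{x,r})>0$ for every $r>0$, so the weak$^*$ convergence $\mu_j \weak \mu$ forces $K_j \cap B_{x,r}\neq\emptyset$ eventually, placing $x$ in the Hausdorff limit. For the reverse inclusion $K^*\setminus H \subset K$ I would use a uniform density lower bound for the minimizing sequence itself: there exists $c=c(n,d,\lambda,\Lambda)>0$ such that
\begin{equation*}
\H^d(K_j \cap B_{y,r}) \geq c r^d \qquad \text{for every } y\in K_j \text{ and } r<\dist(y,H),
\end{equation*}
uniformly in $j$. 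Given $x\in K^*\setminus H$, pick $x_j\in K_j\cup H$ with $x_j\to x$; since $\dist(x,H)>0$ the $x_j$ must eventually belong to $K_j$, and the bound applied at any radius $r<\dist(x,H)/2$ transfers through $F\geq\lambda$ and weak$^*$ convergence to $\mu(\overline{B_{x,2r}})\geq c\lambda r^d>0$, forcing $x\in\spt\,\mu\setminus H=K$.

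The main obstacle is justifying this uniform density lower bound, since Step 2 in the proof of Theorem \ref{thm generale} states it only for the limit varifold $V$. However, the argument there, borrowed from \cite[Theorem 1.3, Step 1]{DePDeRGhi}, is a comparison--iteration scheme that depends only on the bounds \eqref{cost per area} and on the good class property. The latter enters precisely through the $o_j(1)$ energy error incurred when lifting a deformed competitor back to $\mathcal{P}(H)$, which is absorbed into the iteration and produces a constant $c$ independent of $j$. Once this uniform bound is in hand, the above gives $K^* = K\cup H$, the boundary condition passes to $K\cup H$, and hence $K\in\mathcal{R}(H)$.
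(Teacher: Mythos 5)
Your proposal hinges on a uniform lower density bound for the original minimizing sequence $\{K_j\}$ itself, namely $\H^d(K_j\cap B_{y,r})\geq c r^d$ for all $y\in K_j$, $r<\dist(y,H)$, with $c$ independent of $j$. This is false in general, and the gap is not repairable by inspecting the iteration scheme in \cite[Theorem 1.3, Step 1]{DePDeRGhi}: that argument establishes the lower bound only for the limit measure $\mu$, and it does so precisely because one is allowed to take $j\to\infty$ in the competitor construction. For the sequence itself one can take any minimizing sequence and decorate each $K_j$ with an additional compact $d$-rectifiable ``whisker'' (say, a long thin tube) of $\H^d$-measure $1/j$ attached away from $H$; this keeps $K_j\in\mathcal{R}(H)$ (the extra piece is contractible, so it changes neither rectifiability nor the kernel condition of Definition~\ref{Reif}), keeps the sequence minimizing, but destroys any uniform-in-$j$ density bound. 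Consequently the Hausdorff limit $K^*$ of $K_j\cup H$ can be strictly larger than $K\cup H$, and your identification $K^*=K\cup H$ fails, so the boundary condition you derive for $K^*$ cannot be transferred to $K\cup H$ by equality.

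This is exactly the obstruction the paper's proof is designed to circumvent: instead of working with the given sequence, it \emph{constructs} a new minimizing sequence $\{K_j^1\}\subset\mathcal{R}(H)$ with the same weak$^*$ limit and with $d_{\mathcal H}(K_j^1\cup H,K\cup H)\to 0$. The construction (Step~1) uses the Deformation Theorem to collapse the vanishing excess mass of $K_{k_j}$ lying outside a neighborhood of $K\cup H$ onto a $(d-1)$-skeleton, then slices along a sphere of zero $\H^{d-1}$-measure to split the deformed set into a near piece $K_j^1$ and a far piece $K_j^2$. Because the cut genuinely changes the topology, one must then re-verify the boundary constraint for $K_j^1$: this is the purpose of the Mayer--Vietoris argument (Steps~3--4), which uses the fact that $\H^{d-1}((K_j^1\cup H)\cap K_j^2)=0$ and hence $\check{H}_{d-1}((K_j^1\cup H)\cap K_j^2)=(0)$ to show that $i_{*\,K_j^1\cup H,\widetilde{K_j}\cup H}$ is injective. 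Your proposal has no analogue of this step and therefore cannot be completed. Separately, even granting Hausdorff convergence, the passage of the kernel condition to the limit requires the inverse-limit formulation ($Y_n:=\overline{\bigcup_{j\geq n}K_j^1\cup H}$ decreasing to $K\cup H$) rather than a naive limit of $\check{H}_{d-1}(K_j\cup H)$; the paper's Step~5 makes this precise, whereas your appeal to ``continuity'' elides it.
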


The proof of the above theorem will be obtained   by constructing  another minimizing sequence, $(K_j^1)\subset \mathcal{R}(H)$, yielding the same limit set $K$, but with the further property that 
\begin{equation}\label{thesis2}
d_\H(K_j^1\cup H,K\cup H)\to 0.
 \end{equation}
 To do this, we will first deform the part of  \(K_k\)  lying  outside an \(\e_j\)-neighbourhood \(U_{\e_j}(K)\) (\(\e_j \to 0\), $k\gg j$) onto a \((d-1)\)-skeleton and then take its intersection with \(U_{\e_j}(K)\). An application of the Mayer-Vietoris sequence will yield  that the new sequence still satisfies the homological boundary conditions, see Step 4 below. 
 
 In turn, this last step   relies on the properties (ii) and  (iii) above. As in the case of the work by Reifenberg \cite{reifenberg1},  this forces us to restrict ourselves to compact coefficient groups. Let us remark that the extension of Reifenberg  theorem to \(\mathbb Z\) coefficients has been given in \cite{Fang2013} with different  techniques.

\begin{proof}
\emph{Step 1: Construction of the new sequence.} From Theorem \ref{thm generale} 
we know that $\mu_k := F(\cdot,T_{(\cdot)}{K_k})\H^d \res K_k$ converge weakly$^\star$ in $\mathbb R^{n}\setminus H$ 
to the measure $\mu = F(\cdot,T_{(\cdot)}K) \H^d \res K$. Then, for every $\e_j>0$, there exists $k_j$ big enough so that
\begin{equation}\label{eq:smallmass}
\mu_{k_j}(\R^n \setminus U_{\e_j}(K\cup H))<\frac {\Lambda \e_j^d }{ k_1(4n)^d},
\end{equation}
 with $\Lambda$ the constant in equation \eqref{cost per area} and $k_1$ is the constant appearing in the Deformation Theorem, \cite[Theorem 2.4]{DePDeRGhi}.

We cover $U_{5\e_j}(K\cup H)\setminus U_{2\e_j}(K\cup H)$ with a complex $\Delta$ of closed cubes with side length equal to $\e_j/(4n)$ contained in $U_{6\e_j}(K\cup H)\setminus U_{\e_j}(K\cup H)$. 
We can apply an adaptation of the Deformation Theorem \cite[Theorem 2.4]{DePDeRGhi} relative to the set $K_{k_j}$ and 
obtain a Lipschitz deformation $\varphi_j:=\varphi_{\e_j/4n,K_{k_j}}$. 
Observe that $\varphi_j(K_{k_j}) \cap (U_{4\e_j}(K\cup H)\setminus U_{3\e_j}(K\cup H))\subset \Delta_d$, the $d$-skeleton of the complex. We claim that  
\begin{equation}\label{d-1}
\varphi_j(K_{k_j}) \cap (U_{4\e_j}(K\cup H)\setminus U_{3\e_j}(K\cup H))\subset \Delta_{d-1}.
\end{equation}  
 Otherwise by~\cite[Theorem 2.4]{DePDeRGhi} point (5),  $\varphi_j(K_{k_j}) \cap (U_{4\e_j}(K\cup H)\setminus U_{3\e_j}(K\cup H))$ should contain an entire $d$-face of edge length $\e_j/4n$, leading together with \eqref{eq:smallmass} to a contradiction:
\begin{equation*}
\begin{split}
\frac {\e_j^d} {(4n)^d}&\leq \H^d(\varphi_j(K_{k_j}) \cap (U_{4\e_j}(K\cup H)\setminus U_{3\e_j}(K\cup H))) \leq  k_1 \H^d(K_{k_j}\setminus U_{\e_j}(K\cup H))\\
&\leq \frac{ k_1}{\lambda} \FF(K_{k_j}\setminus U_{\e_j}(K\cup H)) \leq   \frac{ k_1}{\lambda}\mu_{k_j}(\R^n \setminus U_{\e_j}(K\cup H))<\frac {\e_j^d} {(4n)^d}.
\end{split}
\end{equation*}
Set $\widetilde{K_j}:=\varphi_j(K_{k_j})$: by \eqref{d-1} and the coarea formula \cite[3.2.22(3)]{FedererBOOK}, there exists $\alpha_j \in (3,4)$ such that
\begin{equation}\label{cond1}
\H^{d-1}(\widetilde{K_j}\cap \partial U_{\alpha_j \e_j}(K\cup H))=0.
\end{equation}
We now let 
\begin{equation}\label{cond2}
K_j^1:=\widetilde{K_j}\cap \overline{ U_{\alpha_j \e_j}(K\cup H)} \quad \mbox{and} \quad K_j^2:=\widetilde{K_j}\setminus U_{\alpha_j \e_j}(K\cup H).
\end{equation}
\medskip
\noindent
\emph{Step 2:  proof of the property \eqref{thesis2}.}

Note that by construction,  
\begin{equation}\label{mezzahaus1}
\quad K_j^1\cup H \subset U_{4 \e_j}(K\cup H), \quad \forall j \in \N.
\end{equation}
If on the other hand for some positive $\bar \e$ there is $x\in K\cup H \setminus U_{\bar \e}(K^1_{j}\cup H)$, then 
necessarily $d(x,H)\geq \bar \e$ as well as $d(x,K^1_{j})\geq \bar \e$: the weak convergence $\mu_{k_j}\weak\mu$ would imply that
$$\mu(B(x,\bar \e/2))\leq \liminf_{j \to \infty} \mu_{k_j}(B(x,\bar \e/2))=0,$$
which contradicts the uniform density lower bounds~\eqref{lower density estimate mu ball} on $B(x, \bar \e/2)$. 
This yields~\eqref{thesis2}.

\medskip
\noindent
\emph{Step 3: boundary constraint of the new sequence.}
To conclude the proof of Theorem \ref{thm:reif}, we need to check that $(K_j^1)\subset \mathcal{R}(H)$. 
By \eqref{cond2} we get
$$\widetilde{K_j}=K_j^1\cup K_j^2, \qquad \mbox{and} \qquad K_j^1\cap K_j^2=\widetilde{K_j}\cap \partial U_{\alpha_j \e_j}(K)$$
and \eqref{cond1},\eqref{cond2} yield 
$$\H^{d-1}((K_j^1\cup H)\cap K_j^2)=\H^{d-1}(K_j^1\cap K_j^2)=0.$$
 Therefore by \cite[Theorem VIII 3']{HurewiczWallman}:
\begin{equation}\label{homology}
\check{H}_{d-1}((K_j^1\cup H)\cap K_j^2)=(0).
\end{equation}
We furthermore observe that 
the sets $\widetilde{K_j}$ are obtained as deformations via Lipschitz maps strongly approximable 
via isotopies, and therefore belong to $\mathcal{R}(H)$. 
Since the map $\varphi_j$ coincides with the identity on $H$, we have
\[
 i_{H,\widetilde{K_j}\cup H} =\varphi_{j}\circ i_{H,K_j\cup H};
\]
moreover, trivially  $i_{H,\widetilde{K_j}\cup H} =i_{K_j^1\cup H,\widetilde{K_j}\cup H}\circ i_{ H,K_j^1\cup H} $. 
Hence by functoriality  
$$
\Ker(i_{*\,K_j^1\cup H,\widetilde{K_j}\cup H}\circ i_{*\, H,K_j^1\cup H}) 
= \Ker(i_{*\,H,\widetilde{K_j}\cup H})=\Ker((\varphi_{j})_*\circ i_{*\,H,K_j\cup H})\supset L.
$$
We claim that $i_{*\,K_j^1\cup H,\widetilde{K_j}\cup H}$ is injective: 
this implies that  
\begin{equation}\label{KjinR} 
\Ker(i_{*\, H,K_j^1\cup H})\supset L,
\end{equation}
 namely $(K_j^1)\subset \mathcal{R}(H)$. 
 
\medskip
\noindent
{\em Step 4: the map $i_{*\,K_j^1\cup H,\widetilde{K_j}\cup H}$ is injective.}
We can write the Mayer-Vietoris sequence  (which for the \v{C}ech homology holds true for compact spaces and with coefficients in a compact group, due to the necessity of having the excision axiom, \cite[Theorem 7.6 p.248]{ES}) and use \eqref{homology}:
\begin{equation*}\label{eq:MW}
(0) \overset{\eqref{homology}}{=}\check{H}_{d-1}((K_j^1\cup H)\cap K_j^2)
\overset{f}{\longrightarrow} \check{H}_{d-1}(K_j^1\cup H)\oplus \check{H}_{d-1}( K_j^2 )
 \overset{g}{\longrightarrow} 
\check{H}_{d-1}(\widetilde{K_j}\cup H)
\longrightarrow ...
\end{equation*}
where $f = (i_{*\,(K_j^1\cup H)\cap K_j^2,K_j^1\cup H},i_{*\,(K_j^1\cup H)\cap K_j^2,K_j^2})$ and 
$g (\sigma,\tau)=\sigma-\tau $. 
The exactness of the sequence  
implies that $g$ is injective: 
in particular  the map $g$ is injective when restricted to the subgroup  $\check{H}_{d-1}(K_j^1\cup H)\oplus (0)$, 
where it coincides with $i_{*\,K_j^1\cup H,\widetilde{K_j}\cup H}$. 

\medskip
\noindent
{\em Step 5: boundary constraint for the limit set.}
Setting 
\[
Y_n :=\overline{\bigcup_{j\geq n}K_j^1\cup H},
\] 
by \eqref{thesis2} we get 
\begin{equation}\label{convhaus} 
d_{\mathcal H}(Y_n, K\cup H)\rightarrow 0.
\end{equation}
 Therefore $K\cup H$ is the inverse limit of the sequence $Y_n$. 
Since the sets $(K_j^1\cup H)$ are in the Reifenberg class $\mathcal{R}(H)$, namely the inclusion \eqref{KjinR} holds, 
by composing the two injections $i_{*\,K_j^1\cup H, Y_n}$ and $i_{*\,H,K_j^1\cup H}$ for every $j\geq n$,
we obtain that 
\[
L\subset\Ker(i_{*\,H,Y_n}).
\]
Since the \v{C}ech homology with coefficients in compact groups is continuous \cite[Definition 2.3]{ES}, the latter inclusion is stable under Hausdorff convergence, \cite[Theorem 3.1]{ES} (see also \cite[Lemma 21A]{reifenberg1}):
 therefore, by \eqref{convhaus}, we conclude
 $$
 L\subset \mbox{Ker}(i_{*\,H,K\cup H}),
 $$
  and eventually  $K \in \mathcal{R}(H)$.
\end{proof}

\begin{remark}\label{rmk:cohom}
Using the contravariance of cohomology theory, the same results can be obtained when considering 
a cohomological definition of boundary, again in the \v{C}ech theory, as introduced in \cite{HarrisonPugh16}. 
In particular a new proof the the theorem there can be obtained with our assumption on the Lagrangian.  

Note that in the cohomological definition of boundary all the Eilenberg-Steenrod axioms are satisfied even with a non-compact group $\G$. This allows us to consider as coefficients set the natural group $\Z$.
\end{remark}

\begin{remark}
We observe that any minimizer $K$ as in Theorem \ref{thm:reif} is also an $(\FF,0,\infty)$ minimal set in the sense of \cite[Definition III.1]{Almgren76}. Indeed the boundary condition introduced in Definition \ref{Reif} is preserved under Lipschitz maps (not necessarily in $\D(x,r)$). In particular, by  \cite[Theorem III.3(7)]{Almgren76}, if $F$ is smooth and strictly elliptic ($\Gamma$ in Definition \ref{d:Almgrenell} is strictly positive), then $K$ is smooth away from the boundary, outside of a relative closed set of $\H^d$-measure zero.  
\end{remark}

\end{document}